\pdfoutput=1
\documentclass[10pt]{scrartcl}
\usepackage{amsmath,amsthm,amssymb,mathtools}
\usepackage{nicefrac}
\usepackage{array,tabu,longtable}
\usepackage[longtable]{multirow}
\usepackage{geometry}
\usepackage{calc}

\usepackage[outline]{contour}
\contourlength{0.1em}

\usepackage{tikz}
\usetikzlibrary{arrows,multiplebond,decorations.markings,calc,matrix,shapes.misc,backgrounds,fadings}

\usepackage{listofitems}

\usepackage[colorlinks,allcolors=blue]{hyperref}
\usepackage[capitalize]{cleveref}

\theoremstyle{plain}

\Crefname{theorem}{Theorem}{Theorems}
\numberwithin{equation}{section}

\newtheorem{lemma}{Lemma}
\numberwithin{lemma}{section}
\Crefname{lemma}{Lemma}{Lemmas}

\Crefname{corollary}{Corollary}{Corollaries}

\newtheorem{proposition}[lemma]{Proposition}
\Crefname{proposition}{Proposition}{Propositions}

\newtheorem*{theorem*}{Theorem}
\newtheorem*{lemma*}{Lemma}

\theoremstyle{definition}

\newtheorem{definition}[lemma]{Definition}
\Crefname{definition}{Definition}{Definitions}

\Crefname{example}{Example}{Examples}

\theoremstyle{remark}

\Crefname{remark}{Remark}{Remarks}

\DeclareRobustCommand{\divby}{%
  \mathrel{\vbox{\baselineskip.65ex\lineskiplimit0pt\hbox{.}\hbox{.}\hbox{.}}}%
}

\DeclareMathOperator{\St}{St}
\DeclareMathOperator{\hei}{ht}

\newcommand{\rA}{\mathsf{A}}
\newcommand{\rB}{\mathsf{B}}
\newcommand{\rC}{\mathsf{C}} 
\newcommand{\rD}{\mathsf{D}} 
\newcommand{\rE}{\mathsf{E}}
\newcommand{\rF}{\mathsf{F}}
\newcommand{\rG}{\mathsf{G}}

\tikzstyle{dynkin-vertex} = [draw, circle, thick, minimum size=4mm, inner sep=0.1mm]
\newcommand{\dv}[3]{%
\node[dynkin-vertex] (a#1) at #2 {#3};
}
\tikzfading[name=fade out,inner color=transparent!0,outer color=transparent!100]
%\tikzstyle{dynkin-edge-label} = [circle,fill=white,path fading=fade out,inner sep=0.1mm,minimum size=5mm]
\tikzset{dynkinedgelabel/.style={node contents={#1}}}

\newcommand{\geqsign}{\draw (-0.05,0.15) -- (0.1, 0) -- (-0.05, -0.15);}
\newcommand{\leqsign}{\draw (0.05,0.15) -- (-0.1, 0) -- (0.05, -0.15);}

\newcommand{\deA}[3][]{%
\draw[thick,#1] (#2) to (#3);
}
\newcommand{\deAoo}[3][]{%
\draw[thick,double distance=3,postaction=decorate,decoration={{markings,mark=at position 0.3 with {\leqsign},mark=at position 0.7 with{\geqsign}}},#1] (#2) to (#3);
}
\newcommand{\deCtwo}[3][]{%
\draw[thick,double distance=3,postaction=decorate,decoration={markings,mark=at position 0.5 with {\leqsign}},#1] (#2) to (#3);
}
\newcommand{\deGtwo}[3][]{%
\draw[thick,triple,postaction=decorate,decoration={markings,mark=at position 0.5 with {\leqsign}},#1] (#2) to (#3);
}
\newcommand{\deAtt}[3][]{%
\draw[thick,quadruple,postaction=decorate,decoration={markings,mark=at position 0.5 with {\leqsign}},#1] (#2) to (#3);
}
\newcommand{\deH}[5][]{%
\draw[thick,postaction=decorate,decoration={markings,mark=at position 0.25 with {\node[node contents={\contour{white}{#4}}];},mark=at position 0.75 with {\node[node contents={\contour{white}{#5}}];}},#1] (#2) to (#3);
}

\newcommand{\aenroot}[2]{%
\begin{tikzpicture}[baseline=(r-2-1.base)]
\matrix (r) [matrix of math nodes,nodes={draw,rounded rectangle},column sep={.5cm,between borders},ampersand replacement=\&] { #1 };
\draw #2;
\end{tikzpicture}
}

\newcommand{\rootAEnTable}[1]{%
\setlength{\tabcolsep}{4pt}
\begin{tabular}{*{\cslen}{c}} \multirow{2}{*}{\cs[-1]} & \multirow{2}{*}{\cs[1]} & #1 \end{tabular}
}
\newcommand{\rootAEn}[1]{%
\readlist\cs{#1}
\ifcase\cslen\relax
\or rank 1
\or rank 2
\or rank 3
\or \rootAEnTable{\cs[2] \\ && \cs[3]}
\or \rootAEnTable{\cs[2] & \multirow{2}{*}{\cs[3]} \\ && \cs[4]}
\or \rootAEnTable{\cs[2] & \cs[3] \\ && \cs[5] & \cs[4]}
\or \rootAEnTable{\cs[2] & \cs[3] & \multirow{2}{*}{\cs[4]} \\ && \cs[6] & \cs[5]}
\or \rootAEnTable{\cs[2] & \cs[3] & \cs[4] \\ && \cs[7] & \cs[6] & \cs[5]}              
\else unsupported rank
\fi
}

\newcommand{\rootK}[1]{%
\readlist\cs{#1}
{\renewcommand{\arraystretch}{1}
\begin{array}{c@{\ }c@{\ }c}
\multirow{2}{*}{\cs[1]} & \cs[2] & \multirow{2}{*}{\cs[3]} \\ & \cs[4]
\end{array}
}
}

\title{The centralizers of root subgroups\\in Kac—Moody Steinberg groups}
\author{Andrei Smolensky\thanks{\,\ email: \texttt{andrei.smolensky@gmail.com}\newline Department of Mathematics and Mechanics, Saint Petersburg State University\newline The research was supported by RSF (project No. 17-11-01261)}}

\begin{document}

\maketitle

\begin{abstract}
For the affine and hyperbolic root system the symmetric part of the centralizers of root subgroups in the corresponding Steinberg groups are calculated. In the affine case the corresponding root subsystems can be computed in term of the centralizers in the spherical root systems, while in the hyperbolic case there emerges a ``zoo'' of examples, many of them non-hyperbolic. This also delivers many examples of naturally occuring root subsystems of infinite rank.
\end{abstract}

\section{Introduction}

Let $A$ be a symmetrizable GCM, $\Phi$ the associated root system, $\Pi$ its fundamental roots, $\Phi_\mathrm{re}$ the set of real roots.
\begin{definition}
The Kac---Moody Steinberg group $\St(\Phi, R)$ over a commutative ring $R$ is defined by generators $x_\alpha(\xi)$, $\alpha\in\Phi_\mathrm{re}$, $\xi\in R$, and relations
\begin{align}
& x_\alpha(\xi)x_\alpha(\zeta) = x_\alpha(\xi+\zeta), \\
& [x_\alpha(\xi), x_\beta(\zeta)] = \prod_{\qquad\mathclap{i\alpha+j\beta\in C(\alpha,\beta)}} x_{i\alpha+j\beta}(N_{\alpha\beta ij}\xi^i\zeta^j),\quad \text{if the product on the RHS is finite,} \label{eq:comm-general} \\
& w_\alpha(\varepsilon)x_\beta(\xi)w_\alpha(-\varepsilon) = x_{s_\alpha\beta}(\eta_{\alpha\beta}\varepsilon^{-\beta(h_\alpha)}\xi), \quad \varepsilon\in R^*, \label{eq:weyl-conj}
\end{align}
where $C(\alpha, \beta) = (\mathbb{Z}_{>0}\alpha+\mathbb{Z}_{>0}\beta)\cap\Phi_\mathrm{re}$, $N_{\alpha\beta ij}$ are certain integers, $\eta_{\alpha\beta}=\pm1$ and $w_\alpha(\varepsilon)=x_\alpha(\varepsilon)x_{-\alpha}(-\varepsilon^{-1})x_\alpha(\varepsilon)$, while $s_\alpha$ is the simple reflection associated to $\alpha$.
\end{definition}
Assume $\alpha\in\Phi_\mathrm{re}$, we are interested in the structure of $Y=C(x_\alpha(\xi))$, the centralizer of the elementary root unipotent, in $G(\Phi, R)$ or in $\St(\Phi, R)$. An explicit description of $Y$ for finite type root systems was used, for example, by M. Stein in his study of central extensions of Chevalley groups over rings \cite{SteGenRelCov}.

From the definition of $\St(\Phi, R)$ one sees that (in generic case/characteristic $0$) $x_\beta(\zeta)\in Y$ if and only if $C(\alpha, \beta)=\varnothing$, while $h_\beta(\varepsilon)\in Y$ if and only if $\beta(h_\alpha)=0$.

Define
\[ Z(\alpha) = \{ \beta\in\Phi \mid C(\alpha, \beta)=\varnothing \}. \]
This set splits into the union of two parts:
\begin{align*}
& Z_s(\alpha) = Z(\alpha) \cap -Z(\alpha), \\
& Z_u(\alpha) = Z(\alpha) \setminus Z_s(\alpha).
\end{align*}
We are interested in the structure of $Z_s(\alpha)$ and the corresponding subgroup of $\St(\Phi, R)$, which is not the whole (Levi part of the) centralizer, but is reasonably close.

We mostly focus our attention on the case of affine and hyperbolic root systems, because in this case there is a uniform criterion for real roots, which we will use implicitly.
\begin{lemma}[{\cite[Proposition~5.10]{KacInfDimLieAlg}}]
If $A$ is a GCM of finite, affine or hyperbolic type, then
\[ \Phi_\mathrm{re} = \left\{ \alpha=\sum_i k_i\alpha_i \ \middle|\ |\alpha|^2>0,\ k_i\cdot|\alpha_i|^2/|\alpha|^2\in\mathbb{Z} \right\}. \]
\end{lemma}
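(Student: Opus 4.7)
The plan is to prove both inclusions, with the forward direction essentially formal and the reverse direction requiring the specific geometry of finite, affine, and hyperbolic GCMs.

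For $\Phi_\mathrm{re}\subseteq\{\ldots\}$, I would take a real root $\alpha=w\alpha_j$ with $w\in W$ and $\alpha_j$ simple. Since $W$ preserves the standard form, $|\alpha|^2=|\alpha_j|^2>0$. The coroot $\alpha^\vee=2\alpha/|\alpha|^2$ equals $w(\alpha_j^\vee)$, hence lies in $Q^\vee=\sum_i\mathbb{Z}\alpha_i^\vee$. Writing $\alpha=\sum_i k_i\alpha_i$ and substituting $\alpha_i=(|\alpha_i|^2/2)\alpha_i^\vee$ yields
\[
\alpha^\vee=\sum_i \frac{k_i\,|\alpha_i|^2}{|\alpha|^2}\,\alpha_i^\vee,
\]
so linear independence of the simple coroots forces each coefficient $k_i|\alpha_i|^2/|\alpha|^2$ to be an integer.

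For the reverse inclusion, the critical input is the classification of imaginary roots in the three types: any $\beta\in\Phi_\mathrm{im}$ satisfies $|\beta|^2\leq 0$ (finite type has no imaginary roots; affine imaginary roots are nonzero multiples of the null root $\delta$, with $|\delta|^2=0$; hyperbolic imaginary roots populate the closed negative cone inside the Tits cone). Consequently any $\alpha\in Q$ with $|\alpha|^2>0$ is either a real root or not a root at all, and the task reduces to showing that $\alpha$ also meeting the divisibility condition must belong to $\Phi$. I would perform a Weyl-group descent: negating $\alpha$ if necessary, assume it is a nonnegative integer combination of the $\alpha_i$, pick a representative $\alpha_0$ of minimal height $\hei(\alpha_0)=\sum_i k_i$ in the positive part of the $W$-orbit, and argue that $\alpha_0$ must be simple — if all pairings $(\alpha_0,\alpha_i^\vee)\leq 0$, then $\alpha_0$ sits in the antidominant chamber, but the finite/affine/hyperbolic trichotomy forces the form to be nonpositive there, contradicting $|\alpha_0|^2>0$; otherwise some $s_i\alpha_0$ remains in the positive part of $Q$ (since $\alpha_0\neq\alpha_i$) but has strictly smaller height, contradicting minimality.

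The main obstacle is the geometric fact underlying the descent: the antidominant chamber of $W$ carries nonpositive norm form, which is exactly where the three types enter essentially — positive definiteness and triviality of the chamber inside $Q$ in the finite case, semidefiniteness with one-dimensional null direction $\mathbb{R}\delta$ in the affine case, and hyperbolic signature (chamber sitting inside the closed negative cone) in the remaining case. The combinatorial bookkeeping for the descent is comparatively routine, because each $s_i$ changes only the $i$-th coefficient and by an integer amount, so all intermediate vectors stay inside $Q$ and retain both hypotheses (which are patently $W$-invariant). Combining these steps pins $\alpha_0=\alpha_i$ and yields $\alpha\in W\alpha_i\subseteq\Phi_\mathrm{re}$, matching Proposition~5.10 of \cite{KacInfDimLieAlg}.
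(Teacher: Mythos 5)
The paper does not actually prove this lemma: it is quoted verbatim from Kac (Proposition~5.10), so your proposal can only be measured against the standard argument. Your forward inclusion is that standard argument and is fine: $|\alpha|^2=|\alpha_j|^2>0$ for $\alpha=w\alpha_j$, and $\alpha^\vee=w(\alpha_j^\vee)\in Q^\vee$ forces $k_i|\alpha_i|^2/|\alpha|^2\in\mathbb{Z}$ by linear independence of the simple coroots.

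The reverse inclusion, however, has a genuine gap. First, the right-hand set ranges over all of $Q$, so ``negating $\alpha$ if necessary, assume it is a nonnegative combination'' is not available: sign-coherence of such $\alpha$ is part of what has to be proved. Second, and fatally, in the descent you claim that if $\langle\alpha_0,\alpha_i^\vee\rangle>0$ for some $i$ then $s_i\alpha_0$ ``remains in the positive part of $Q$ (since $\alpha_0\neq\alpha_i$)''. That justification is the standard one for positive \emph{roots} and is circular here: $\alpha_0$ is only known to be an element of $Q_+$, and reflecting such an element can make its $i$-th coordinate negative, after which minimality of height over the positive part of the orbit yields no contradiction. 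The symptom is that your reverse argument never uses the divisibility hypothesis, and uses the finite/affine/hyperbolic hypothesis only through the antidominant-chamber claim, which in the form you actually need (if $\lambda\in Q_+$ and $(\lambda,\alpha_i)\leqslant0$ for all $i$, then $|\lambda|^2\leqslant0$) holds for \emph{every} GCM; so, were the argument valid, it would prove the characterization for arbitrary symmetrizable GCMs and without divisibility, which is false. A concrete test already in type $\rC_2$ (with $|\alpha_1|^2=2$, $|\alpha_2|^2=4$): $\alpha=\alpha_1+2\alpha_2$ has $|\alpha|^2=10>0$, lies in $Q_+$, is not a root, and is the minimal-height element of the positive part of its $W$-orbit; here $\langle\alpha,\alpha_2^\vee\rangle=3>0$ but $s_2\alpha=\alpha_1-\alpha_2$ has mixed signs, so your dichotomy stalls exactly at the unjustified step. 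What actually excludes this $\alpha$ is divisibility ($k_1|\alpha_1|^2/|\alpha|^2=\nicefrac{1}{5}\notin\mathbb{Z}$), so any correct proof must feed the divisibility condition (and, beyond finite and affine type, hyperbolicity) into the orbit argument, as Kac's proof of Proposition~5.10 does.
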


The paper is organized as follows. In \cref{sec:preliminaries} we recall and establish some basic properties of $Z_s(\alpha)$. In \cref{sec:affine root systems} the centralizers are calculated for the root systems of affine types. The rest of the paper is devoted to the study of indefinite type root systems, with the main focus on the hyperbolic ones. The latter split into three principal cases:
\begin{itemize}
\item root systems of rank $3$ (\cref{sec:rk3}), which is the easiest part;
\item root systems of rank $\geqslant4$ with finite rank centralizers (\cref{sec:rk-geq4}), where some additional computational effort delivers a complete answer;
\item root systems of rank $\geqslant4$ with the centralizers of (apparently) infinite rank (\cref{sec:infinite rank centralizers}), where we only collect some experimental observations.
\end{itemize}
An example of an infinite rank subsystem inside a rank $3$ root system was presented in \cite{MooPiaInfRootSystems} by providing an infinite family of real roots which serve as a ``weak basis'' for a certain subsystem (for a definition of weak independence see \cref{sec:hyperbolic root systems}). The coefficients of these roots are given by polynomials. Here we find many examples of (apparently) infinite-rank subsystems cut out by a simple condition on its elements. Inside it one can find an infinite weakly independent family of roots which is very likely the basis of this subsystem. However, apart from the simplest cases this family is not given by a single polynomial formula. The most striking example is the root system with the following Dynkin diagram:
\[
\begin{tikzpicture}[baseline={(current bounding box.center)}]
\dv{0}{(0,0)}{1}
\dv{1}{(45:1)}{2}
\dv{2}{($(45:1)+(-45:1)$)}{3}
\dv{3}{(-45:1)}{4}
\deA{a0}{a1}\deA{a0}{a2}\deA{a0}{a3}\deA{a1}{a2}\deA{a2}{a3}
\end{tikzpicture}
\]
Here the basis of $Z_s(\alpha_4)$ seems to be covered by infinitely many polynomial families parametrized by a product of two primes congruent to $\pm1$ modulo $12$.

\section{Properties of $Z_s(\alpha)$} \label{sec:preliminaries}
\begin{lemma}
For $\alpha,\beta\in\Phi_\mathrm{re}$ the following statements are equivalent:
\begin{enumerate}
\item $C(\alpha, \beta) = C(\alpha, -\beta) = \varnothing$;
\item $\langle \beta, \alpha^\vee \rangle = 0$ and $\alpha+\beta\notin \Phi_\mathrm{re}$.
\end{enumerate}
\end{lemma}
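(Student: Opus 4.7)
The plan is to exploit two standard Kac--Moody facts about real roots: (i) the only nonzero scalar multiples of a real root $\beta$ that lie in $\Phi$ are $\pm\beta$, and (ii) for a real root $\alpha$ and any root $\gamma$, the $\alpha$-string $\{\gamma-p\alpha,\ldots,\gamma+q\alpha\}$ consists entirely of roots with $p-q=\langle\gamma,\alpha^\vee\rangle$. A third ingredient, coming from the length criterion quoted above, is that a vector of positive squared length which happens to be a root is automatically real. A bookkeeping symmetry I will use freely is $C(-\alpha,\beta)=-C(\alpha,-\beta)$, which follows straight from the definition.

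For $(1)\Rightarrow(2)$, the claim $\alpha+\beta\notin\Phi_\mathrm{re}$ is immediate, since $\alpha+\beta\in C(\alpha,\beta)$ the moment it is a real root. To pin down $\langle\beta,\alpha^\vee\rangle=0$, set $n=\langle\beta,\alpha^\vee\rangle$ and observe that $s_\alpha(\beta)=\beta-n\alpha$ is a real root. Rewriting this element with positive coefficients in either $\{\alpha,\beta\}$ or $\{-\alpha,\beta\}$ according to the sign of $n$ shows that $n<0$ places it in $C(\alpha,\beta)$ and $n>0$ in $C(-\alpha,\beta)$. The hypothesis gives $C(\alpha,\beta)=\varnothing$ directly and $C(-\alpha,\beta)=-C(\alpha,-\beta)=\varnothing$ via the symmetry above, so $n=0$.

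For $(2)\Rightarrow(1)$, the orthogonality $\langle\alpha,\beta\rangle=0$ (which follows from $\langle\beta,\alpha^\vee\rangle=0$ since $|\alpha|^2>0$) yields $|i\alpha\pm j\beta|^2=i^2|\alpha|^2+j^2|\beta|^2>0$, so any $i\alpha\pm j\beta$ that is a root must be real. Applying $s_\beta$ (which fixes $\alpha$ by the same orthogonality) to the hypothesis $\alpha+\beta\notin\Phi$ also gives $\alpha-\beta\notin\Phi$, so it is enough to treat $C(\alpha,\beta)$. Suppose for contradiction that $\gamma=i\alpha+j\beta\in\Phi_\mathrm{re}$ with $i,j>0$. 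Since $\langle\gamma,\alpha^\vee\rangle=2i$ and $s_\alpha(\gamma)=-i\alpha+j\beta$ is also a real root, the $\alpha$-string through $\gamma$ has $p-q=2i$ with $p,q\geq 0$, so $p\geq 2i$; in particular every $\gamma-k\alpha$ with $0\leq k\leq 2i$ is a root. The midpoint $\gamma-i\alpha=j\beta$ must then lie in $\Phi$, which by (i) forces $j=1$. But then $\gamma-(i-1)\alpha=\alpha+\beta$ is also in the string, contradicting $\alpha+\beta\notin\Phi$.

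The only delicate step is the $\alpha$-string analysis in $(2)\Rightarrow(1)$: both hypotheses have to be used simultaneously to rule out the tall case $j\geq 2$ (via fact (i) applied to $j\beta$) and the edge case $j=1$ (via the forbidden root $\alpha+\beta$ appearing inside the string). Everything else is routine manipulation of the definition of $C(\alpha,\beta)$ together with the sign-flipping symmetry.
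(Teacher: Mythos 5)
Your proof is correct, and in the second implication it is organized genuinely differently from the paper's. For $(1)\Rightarrow(2)$ the two arguments are essentially the same: the paper inspects the $\alpha$-string through $\beta$ and uses that its extremal roots are real to force $p=q=0$, while you reflect $\beta$ by $s_\alpha$ and sort the result into $C(\alpha,\beta)$ or $C(-\alpha,\beta)=-C(\alpha,-\beta)$ according to the sign of $\langle\beta,\alpha^\vee\rangle$; same content, slightly lighter prerequisites. For $(2)\Rightarrow(1)$ the paper takes an arbitrary (possibly imaginary) root $\gamma=m\alpha+n\beta$, proves it is real via $w_\alpha w_\beta(\gamma)=-\gamma$, chooses $m$ minimal, and derives the contradiction $q<0$ from $p=0$ and $p-q=2m$; you dispense with the minimality argument entirely by descending the $\alpha$-string through $\gamma$ at least $2i$ steps, so that $j\beta$ lies in the string (forcing $j=1$ since only $\pm\beta$ are multiples of a real root) and then $\alpha+\beta$ does too, contradicting the hypothesis. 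What your route buys is a shorter, non-inductive string argument; what it costs is the auxiliary fact that a root of positive norm is real, which you need both to pass from $\alpha+\beta\notin\Phi_\mathrm{re}$ to $\alpha+\beta\notin\Phi$ and to handle $\alpha-\beta$. One small correction of attribution: that fact does not follow from the quoted description of $\Phi_\mathrm{re}$ (which only characterizes which lattice vectors are real roots); it is the standard statement that imaginary roots of a symmetrizable Kac--Moody algebra have non-positive norm (Kac, Proposition~5.2), or you could instead borrow the paper's observation that $s_\alpha s_\beta(\alpha+\beta)=-(\alpha+\beta)$ rules out $\alpha+\beta$ being an imaginary root. With that reference fixed, the argument is complete.
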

\begin{proof}
The proof is very similar to the proof of \cite[Lemma~1]{PetKacInfFlagVarConjTheor}.

Consider the $\alpha$-string through $\beta$:
\[ \beta-p\alpha,\ldots,\beta,\ldots,\beta+q\alpha. \]
The first and the last roots of the root string are real, so if $C(\alpha, \beta)=\varnothing$, then $q=0$. Similarly, $p=0$ if $C(\alpha, -\beta) = \varnothing$. But $\langle \beta, \alpha^\vee \rangle = p-q = 0$.

Now suppose that $\langle \beta, \alpha^\vee \rangle = 0$ and $\alpha+\beta\notin\Phi_\mathrm{re}$. Assume that $\gamma = m\alpha+n\beta\in\Phi$ for some $m,n>0$. Note that $w_\alpha w_\beta(\gamma) = -\gamma$, so $\gamma$ is real.

We can choose $\gamma$ so that $m$ is minimal. Consider the $\alpha$-string through $\gamma$:
\[ \gamma-p\alpha,\ldots,\gamma,\ldots,\gamma+q\alpha. \]
Since $m$ is minimal, $\gamma-\alpha\notin\Phi$ unless $m=1$. But then $\gamma-\alpha=n\beta$, which is not a root unless $n=1$, while $\alpha+\beta\notin \Phi_\mathrm{re}$ by assumption. Thus we conclude that $\gamma$ is the first root of this root string and $p=0$. But $p-q=\langle \gamma, \alpha^\vee \rangle = 2m$, so $q<0$, a contradiction.

Since $\alpha+\beta\notin\Phi_\mathrm{re}$ implies $w_\beta(\alpha+\beta)=\alpha-\beta\notin\Phi_\mathrm{re}$ when $\langle \beta, \alpha^\vee \rangle=0$, it follows that $C(\alpha, -\beta)$ is empty as well.
%\todo[inline]{Could we just say that $(\mathbb{Z}\alpha+\mathbb{Z}\beta)\cap\Phi\cong\rC_2$ or $A_1\times A_1$? Apparently not, for how does one ditinguish between the two?}
\end{proof}
As a corollary, one can describe $Z_s(\alpha)_\mathrm{re}$ as
\[ Z_s(\alpha)_\mathrm{re} = \{ \beta\in\Phi_\mathrm{re} \mid \langle\alpha,\beta^\vee\rangle=0 \text{ and } \alpha+\beta\notin\Phi_\mathrm{re} \}. \]

\begin{lemma} \label{lemma:Zs is closed under sum}
$Z_s(\alpha)$ is closed, i.e. if $\beta,\gamma\in Z_s(\alpha)$ and $\beta+\gamma\in\Phi$, then $\beta+\gamma\in Z_s(\alpha)$.
\end{lemma}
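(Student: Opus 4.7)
Proof plan.

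I will argue by contradiction. Suppose $\beta,\gamma\in Z_s(\alpha)$ with $\delta:=\beta+\gamma\in\Phi$ but $\delta\notin Z_s(\alpha)$. After swapping $\beta,\gamma$ with $-\beta,-\gamma$ if necessary, we may assume $C(\alpha,\delta)\neq\varnothing$, so that there exist integers $m,n\geqslant 1$ with $\mu:=m\alpha+n\delta=m\alpha+n\beta+n\gamma\in\Phi_\mathrm{re}$. The strategy is to extract from $\mu$ a real root lying in $C(\alpha,\pm\beta)$ or $C(\alpha,\pm\gamma)$, contradicting $\beta,\gamma\in Z_s(\alpha)$.

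The preceding lemma applied to the real case gives $\langle\beta,\alpha^\vee\rangle=\langle\gamma,\alpha^\vee\rangle=0$ and $\alpha+\beta,\alpha+\gamma\notin\Phi_\mathrm{re}$. Then $\langle\mu,\alpha^\vee\rangle=2m$, so the $\alpha$-string through $\mu$ descends by at least $2m$ steps, placing $\mu-k\alpha\in\Phi$ for every $0\leqslant k\leqslant 2m$. In particular $\mu-m\alpha=n\delta\in\Phi$, and when $\delta\in\Phi_\mathrm{re}$ this forces $n=1$, reducing us to the principal subcase $\mu=\alpha+\beta+\gamma\in\Phi_\mathrm{re}$. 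Here I would analyze the $\beta$-string and $\gamma$-string through $\mu$: in each, the $\alpha$-coefficient is frozen at $1$, and using $\alpha+\beta,\alpha+\gamma\notin\Phi_\mathrm{re}$ together with the explicit values of $\langle\mu,\beta^\vee\rangle$ and $\langle\mu,\gamma^\vee\rangle$, one produces a real root of the form $\alpha+k\beta$ or $\alpha+k\gamma$ with $k\neq 0$ somewhere in the string, contradicting $\beta\in Z_s(\alpha)$ or $\gamma\in Z_s(\alpha)$.

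The main obstacle is the case in which one of $\beta,\gamma,\delta$ is imaginary: the orthogonality relation is no longer automatic from the preceding lemma, and intermediate roots in a string through a real root need not themselves be real, so the chain of strings can degrade into the imaginary locus before reaching a useful real endpoint. I would handle this by passing to a minimal counterexample: choose $\mu=m\alpha+l\beta+r\gamma\in\Phi_\mathrm{re}$ with $m\geqslant 1$, $l,r\geqslant 0$, $l+r\geqslant 1$, minimizing $l+r$ (and $m$ secondarily). If $l=0$ then $\mu$ itself already exhibits $C(\alpha,\gamma)\neq\varnothing$, and symmetrically for $r=0$. Otherwise one descends along the $\beta$- or $\gamma$-direction via the corresponding root string: either one lands on a smaller real root in the same cone, contradicting minimality, or a coefficient is pushed down to $0$, delivering the contradiction directly. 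In this way the imaginary case is absorbed into the same framework as the real one.
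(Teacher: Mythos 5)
Your reduction to the principal subcase is essentially sound (the unbroken $\alpha$-string through $\mu=m\alpha+n\delta$ with $\langle\mu,\alpha^\vee\rangle=2m$ does give $n\delta\in\Phi$, hence $n=1$ for real $\delta$, and taking $k=m-1$ in the same descent gives $\alpha+\beta+\gamma\in\Phi$), and up to that point you are on the paper's track. The gap is in the endgame. Every element of the $\beta$-string through $\alpha+\beta+\gamma$ has $\gamma$-coefficient exactly $1$, and every element of the $\gamma$-string has $\beta$-coefficient exactly $1$; so no element of either string can have the form $\alpha+k\beta$ or $\alpha+k\gamma$, nor lie in $C(\alpha,\pm\beta)$ or $C(\alpha,\pm\gamma)$. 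Hence the contradiction you announce (``a real root of the form $\alpha+k\beta$ or $\alpha+k\gamma$ somewhere in the string, contradicting $\beta\in Z_s(\alpha)$ or $\gamma\in Z_s(\alpha)$'') cannot be produced this way; this is precisely the missing idea. What the strings actually yield, and what the paper uses, is different: since $\alpha+\gamma$ is orthogonal to the pair in the sense $\langle\gamma,\alpha^\vee\rangle=0$, it has norm $|\alpha|^2+|\gamma|^2>0$, so being non-real it is not a root at all; therefore $\alpha+\beta+\gamma$ is the \emph{lowest} element of its $\beta$-string, which forces $\langle\alpha+\beta+\gamma,\beta^\vee\rangle=2+\langle\gamma,\beta^\vee\rangle\leqslant0$, i.e.\ $\langle\gamma,\beta^\vee\rangle\leqslant-2$, and symmetrically $\langle\beta,\gamma^\vee\rangle\leqslant-2$. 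These two inequalities give $2(\beta,\gamma)\leqslant-|\beta|^2-|\gamma|^2$, hence $|\beta+\gamma|^2\leqslant0$, contradicting the fact that $\beta+\gamma$ is a real root (the paper phrases this as: in a rank-$2$ system with both off-diagonal Cartan integers $\leqslant-2$ the sum of the two fundamental roots is never real). So the contradiction lands on $\beta+\gamma$ itself, not on the emptiness of the cones attached to $\beta$ or $\gamma$; without this redirection your argument does not close.

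A secondary remark: the paragraph on imaginary $\beta,\gamma,\delta$ is not workable as sketched — for imaginary $\beta$ there is no coroot $\beta^\vee$ and no unbroken string in the $\beta$-direction, and ``descending until a coefficient reaches $0$'' is neither guaranteed to stay in $\Phi$ nor to end at a real root, which is what a contradiction with $C(\alpha,\gamma)=\varnothing$ requires. This is not a point of comparison against the paper, whose proof (like your main case) implicitly works with $\beta$, $\gamma$ and $\beta+\gamma$ real; but it should not be presented as if it absorbed the imaginary case.
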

\begin{proof}
Let $\beta,\gamma\in Z_s(\alpha)$ be two roots such that $\beta+\gamma\in\Phi$. Suppose that $\beta+\gamma\notin Z_s(\alpha)$. Then, since $\langle \beta+\gamma, \alpha^\vee \rangle = \langle \beta, \alpha^\vee \rangle + \langle \gamma, \alpha^\vee \rangle = 0$, $\alpha+\beta+\gamma$ is a root. Consider the $\beta$-string through $\alpha+\beta+\gamma$:
\[ \alpha+\beta+\gamma,\ldots,\alpha+\beta+\gamma + q\beta. \]
$\alpha+\beta+\gamma$ is the first root of this string because $\alpha+\gamma\notin\Phi$. Then
\[ 0-q = \langle \alpha+\beta+\gamma, \beta^\vee \rangle = 0+2+\langle \gamma, \beta^\vee \rangle, \quad \text{so} \quad \langle \gamma, \beta^\vee \rangle\leqslant-2. \]
Similarly, considering the $\gamma$-string through $\alpha+\beta+\gamma$, one gets $\langle \beta, \gamma^\vee \rangle\leqslant-2$. But in a root system of type $\begin{psmallmatrix}2 & -b \\ -a & 2\end{psmallmatrix}$ with $a,b\geqslant2$ the sum of the two fundamental roots is never a real root, a contradiction.
\end{proof}
\begin{lemma} \label{lemma:Zs is closed under W}
$Z_s(\alpha)$ is closed under the action of the corresponding Weyl group.
\end{lemma}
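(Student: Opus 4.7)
The plan is to interpret ``the corresponding Weyl group'' as the subgroup of $W$ generated by the reflections $s_\gamma$ for $\gamma\in Z_s(\alpha)_\mathrm{re}$. Since this subgroup is generated by such reflections, it is enough to prove that for every $\beta\in Z_s(\alpha)$ and every $\gamma\in Z_s(\alpha)_\mathrm{re}$ one has $s_\gamma\beta\in Z_s(\alpha)$.

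The crucial observation is the following: because $\gamma\in Z_s(\alpha)_\mathrm{re}$, the preceding characterization gives $\langle\alpha,\gamma^\vee\rangle=0$, and hence $s_\gamma(\alpha)=\alpha$. In other words, the reflection we conjugate by literally fixes $\alpha$. Moreover, $s_\gamma$ is an element of $W$ and therefore permutes $\Phi_\mathrm{re}$.

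With this in hand, the verification is a one-line contradiction. Suppose $s_\gamma\beta\notin Z(\alpha)$; then there exist $m,n>0$ with $m\alpha+n\,s_\gamma\beta\in\Phi_\mathrm{re}$. Applying $s_\gamma$ to this real root (which keeps us inside $\Phi_\mathrm{re}$) and using $s_\gamma(\alpha)=\alpha$ yields
\[
s_\gamma\bigl(m\alpha+n\,s_\gamma\beta\bigr)=m\alpha+n\beta\in\Phi_\mathrm{re},
\]
contradicting $\beta\in Z(\alpha)$. The same argument applied to $-\beta$ (using $-s_\gamma\beta=s_\gamma(-\beta)$) shows $s_\gamma\beta\in-Z(\alpha)$, so $s_\gamma\beta\in Z_s(\alpha)$ as required.

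There is essentially no obstacle here: the whole content is packed into the identity $s_\gamma(\alpha)=\alpha$, which already appeared implicitly in the characterization of $Z_s(\alpha)_\mathrm{re}$ obtained as a corollary of the previous lemma. The argument also makes clear that one does not need $\beta$ to be a real root, so the statement holds for the whole of $Z_s(\alpha)$ (including its imaginary part).
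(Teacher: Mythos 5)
Your proof is correct, and its engine is the same as the paper's: a reflection $s_\gamma$ with $\gamma$ strictly orthogonal to $\alpha$ fixes $\alpha$ and, being a Weyl group element, permutes (real) roots, so it transports the defining condition of $Z_s(\alpha)$. The difference is where you apply this observation: the paper works with the criterion coming from the first lemma, transporting ``$\alpha+\gamma\notin\Phi$'' (together with orthogonality) under $s_\beta$, whereas you argue directly from the definition $C(\alpha,\beta)=\varnothing$, getting a contradiction from $s_\gamma(m\alpha+n\,s_\gamma\beta)=m\alpha+n\beta$. What your variant buys is that it makes two points explicit which the paper's one-liner leaves implicit: the $-Z(\alpha)$ half of the membership $Z_s(\alpha)=Z(\alpha)\cap-Z(\alpha)$, and the fact that the element being reflected need not be a real root (the paper's criterion was stated only for $\beta\in\Phi_\mathrm{re}$, so your argument covers the imaginary part of $Z_s(\alpha)$ without appealing to that equivalence). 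The paper's version is terser and reuses the already-established characterization; yours is marginally more self-contained. Both are complete.
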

\begin{proof}
Since $\alpha+\gamma\notin\Phi$, also $s_\beta(\alpha+\gamma)\notin\Phi$. But $\langle \beta,\alpha^\vee \rangle = 0$, so $s_\beta(\alpha+\gamma)=\alpha+s_\beta(\gamma)$, hence $s_\beta(\gamma)\in Z_s(\alpha)$.
\end{proof}
This allows to describe the subgroup of $\St(\Phi,R)$ generated by all $X_\beta$ for $\beta\in Z_s(\alpha)$ as the image of $\St(Z_s(\alpha),R)$ under the homomorphism induced by the inclusion $Z_s(\alpha)\hookrightarrow\Phi$.

\section{Centralizers in affine root systems} \label{sec:affine root systems}
In the affine case the calculation of centralizers can be briefly described as ``affinization of the centralizer in a finite subsystem'' (the centralizers in the finite case are listed in \cite[table after Corollary~5.9]{SteGenRelCov}). Namely, using the standard notation for affine root systems \cite[\S4.8]{KacInfDimLieAlg}, one divides them into three classes as follows: if $\Phi=\Psi^{(r)}$, where $\Psi$ is of finite type and $r=1,2,3$, then the set of real roots is described as follows:

If $r=1$,
\[ \Phi_\mathrm{re} = \{ \alpha+n\delta \mid \alpha\in\Psi,\ n\in\mathbb{Z} \}; \]

If $r=2$ or $3$, $\Phi\neq\rA_{2\ell}^{(2)}$,
\[ \Phi_\mathrm{re} = \{ \alpha+n\delta \mid \alpha\in\Psi^<,\ n\in\mathbb{Z} \} \cup \{ \alpha+nr\delta \mid \alpha\in\Psi^>,\ n\in\mathbb{Z} \}; \]

If $\Phi = \rA_{2\ell}^{(2)}$,
\begin{multline*}
\Phi_\mathrm{re} = \{ \nicefrac{1}{2}\cdot(\alpha+(2n-1)\delta) \mid \alpha\in\Psi^>,\ n\in\mathbb{Z} \} \cup {} \\ \cup \{ \alpha+n\delta \mid \alpha\in\Psi^<,\ n\in\mathbb{Z} \} \cup \{ \alpha+2n\delta \mid \alpha\in\Psi^>,\ n\in\mathbb{Z} \}.
\end{multline*}
Here $\Psi^{<}$ and $\Psi^{>}$ denote the subsets of all short and long roots of $\Psi$ respectively, and $\delta$ is (in the basis of the fundamental roots) the unique entry-wise positive primitive vector in the kernel of the Cartan matrix of $\Phi$ (listed in \cite[\S4.8, Tables Aff~1, Aff~2 and Aff~3]{KacInfDimLieAlg}). In particular, $\delta$ is an isotropic root and is orthogonal to all real roots of $\Phi$.

Consider the case $r=1$. It is easy to see that $\beta+n\delta\in Z_s(\alpha)$ if and only if $\beta\in Z_s(\alpha)$. Indeed, since the expression of a root in $\Psi$ in the form $\beta+n\delta$ is unique, $\alpha+\beta+n\delta\in\Phi_\mathrm{re}$ if and only if $\alpha+\beta\in\Psi$. On the other hand, by the definition of $\delta$ one has $(\beta+n\delta,\alpha) = (\beta,\alpha)$.

A very similar calculation works for other cases. The computation of the centralizers is summarized in \cref{table:affine}.

\begin{table}[h]
\renewcommand{\arraystretch}{1.5}
\centering
\begin{tabu} to 0.9\linewidth {c@{\qquad}c@{\qquad}l@{\qquad}X}
\hline
$\Phi$ & $\alpha$ & $Z_s(\alpha)$ & particular cases \\\hline
$\rA_\ell^{(1)}$ & long & $\rA_{\ell-2}^{(1)}$ & $\varnothing$ for $\ell=2$ \\\hline
$\rB_\ell^{(1)}$ & long & $\rB_{\ell-2}^{(1)}\oplus\rA_1^{(1)}$ & $\rC_2^{(1)}\oplus\rA_1^{(1)}$ for $\ell=4$, $\rA_1^{(1)}\oplus\rA_1^{(1)}$ for $\ell=3$ \\
& short & $\rD_{\ell-1}^{(1)}$ & $\rA_3^{(1)}$ for $\ell=4$, $\rA_1^{(1)}\oplus\rA_1^{(1)}$ for $\ell=3$ \\\hline
$\rC_\ell^{(1)}$ & long & $\rC_{\ell-1}^{(1)}$ & \\
& short & $\rC_{\ell-2}^{(1)}$ & $\rA_1^{(1)}$ for $\ell=3$, $\varnothing$ for $\ell=2$ \\\hline
$\rD_\ell^{(1)}$ & long & $\rD_{\ell-2}^{(1)}\oplus\rA_1^{(1)}$ & $\rA_3^{(1)}\oplus\rA_1^{(1)}$ for $\ell=5$, $3\rA_1^{(1)}$ for $\ell=4$ \\\hline
$\rG_2^{(1)}$ & any & $\rA_1^{(1)}$ & \\\hline
$\rF_4^{(1)}$ & long & $\rC_3^{(1)}$ & \\
& short & $\rA_3^{(1)}$ & \\\hline
$\rE_6^{(1)}$ & long & $\rA_5^{(1)}$ & \\\hline
$\rE_7^{(1)}$ & long & $\rD_6^{(1)}$ & \\\hline
$\rE_8^{(1)}$ & long & $\rE_7^{(1)}$ & \\\hline
$\rA_{2\ell}^{(2)}$ & long & $\rA_{2\ell-2}^{(2)}$ & $\varnothing$ for $\ell=1$ \\
& medium & $\rA_{2\ell-4}\oplus\rA_1^{(1)}$ & $\rA_1^{(1)}$ for $\ell=2$ \\
& short & $\rA_{2\ell-3}^{(2)}$ & \\\hline
$\rA_{2\ell-1}^{(2)}$ & long & $\rA_{2\ell-3}^{(2)}$ & \\
& short & $\rA_{2\ell-5}^{(2)}\oplus\rA_1^{(1)}$ & $2\rA_1^{(1)}$ for $\ell=3$ \\\hline
$\rD_{\ell+1}^{(2)}$ & long & $\rD_{\ell-1}^{(2)}\oplus\rA_1^{(1)}$ & $2\rA_1^{(1)}$ for $\ell=3$, $\rA_1^{(1)}$ for $\ell=2$ \\
& short & $\rB_{\ell-1}^{(1)}$ & $\rC_2^{(1)}$ for $\ell=3$, $\rA_1^{(1)}$ for $\ell=2$ \\\hline
$\rE_6^{(2)}$ & long & $\rA_7^{(2)}$ & \\
& short & $\rB_3^{(1)}$ & \\\hline
$\rD_4^{(3)}$ & any & $\rA_1^{(1)}$ & \\\hline
\end{tabu}
\caption{Centralizers for the root systems of affine types\label{table:affine}}
\end{table}

\section{Centralizers in hyperbolic root systems} \label{sec:hyperbolic root systems}
Following \cite{MooPiaInfRootSystems}, we say that a tuple $\mathcal{D}=(A,\Pi,\Pi^\vee,V,V^\vee,\langle \cdot,\cdot\rangle)$ is a set of root data if $A=(A_{ij})_{i,j\in J}$ is a GCM, $V$ and $V^\vee$ are vector spaces over $\mathbb{R}$, $\langle\cdot,\cdot\rangle\colon V\times V^\vee\to\mathbb{R}$ is a non-degenerate pairing, $\Pi=\{\alpha_i\}_{i\in J}\subset V$ and $\Pi^\vee=\{\alpha_i^\vee\}_{i\in J}\subset V^\vee$ are such that $\langle \alpha_i,\alpha_j^\vee\rangle=A_{ij}$ and $Q=\sum_J\mathbb{Z}\alpha_i$, $Q^\vee=\sum_J\mathbb{Z}\alpha_i^\vee$ are free abelian groups with bases $\{\gamma_i\}_{i\in I}\subset Q$, $\{\gamma_i^\vee\}_{i\in I}\subset Q^\vee$, satisfying
\[ \Pi\subset\bigoplus_I \mathbb{Z}_{\geqslant0}\gamma_i,\quad \Pi^\vee\subset\bigoplus_I \mathbb{Z}_{\geqslant0}\gamma_i^\vee. \]
The fundamental roots $\alpha_i$ are not assumed to be linearly independent, but satisfy the \emph{weak independence property}:
\begin{proposition}[{\cite[Proposition~1]{MooPiaInfRootSystems}}]\label{prop:wip}
For all $k\in J$
\[
\mathbb{Z}\alpha_k \cap \sum_{j\neq k}\mathbb{Z}_{\geqslant0}\alpha_j = \{0\},\quad
\mathbb{Z}\alpha_k^\vee \cap \sum_{j\neq k}\mathbb{Z}_{\geqslant0}\alpha_j^\vee = \{0\}.
\]
\end{proposition}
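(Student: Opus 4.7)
The plan is to suppose there is a non-trivial element $v = n\alpha_k = \sum_{j\neq k}m_j\alpha_j$ of the intersection (with $n\in\mathbb{Z}$, $m_j\in\mathbb{Z}_{\geqslant 0}$) and to rule out the cases $n>0$ and $n<0$ using two complementary ingredients: the sign pattern of the GCM entries and the non-negativity of the $\gamma_i$-coordinates of each $\alpha_j$.

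For $n>0$, I would pair the identity with $\alpha_k^\vee$. The left-hand side evaluates to $2n>0$, whereas the right-hand side equals $\sum_{j\neq k}m_j A_{jk}$, which is non-positive since $A_{jk}\leqslant 0$ for $j\neq k$ by the GCM axioms and $m_j\geqslant 0$. This contradiction disposes of the case $n>0$.

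For $n<0$, I would rewrite the identity as $|n|\alpha_k + \sum_{j\neq k}m_j\alpha_j = 0$, a combination of fundamental roots with all coefficients non-negative and the coefficient of $\alpha_k$ strictly positive. Expanding each $\alpha_j=\sum_i c_{ij}\gamma_i$ with $c_{ij}\in\mathbb{Z}_{\geqslant 0}$ (which is exactly the hypothesis $\Pi\subset\bigoplus_I\mathbb{Z}_{\geqslant 0}\gamma_i$), the $\gamma_i$-coefficient of the left-hand side becomes $|n|c_{ik}+\sum_{j\neq k}m_j c_{ij}$. Since $\{\gamma_i\}$ is a $\mathbb{Z}$-basis of $Q$, each such sum vanishes; being a sum of non-negative integers, each summand must individually vanish. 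In particular $|n|c_{ik}=0$ for every $i$, which forces $\alpha_k=0$ and contradicts $\langle\alpha_k,\alpha_k^\vee\rangle=A_{kk}=2$. The coroot statement follows by the same argument with roots and coroots swapped, using $\Pi^\vee\subset\bigoplus_I\mathbb{Z}_{\geqslant 0}\gamma_i^\vee$ in place of $\Pi\subset\bigoplus_I\mathbb{Z}_{\geqslant 0}\gamma_i$.

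There is no real obstacle; the only mildly interesting point is that neither ingredient alone suffices, since pairing with $\alpha_k^\vee$ handles $n>0$ but is uninformative when $n<0$, whereas the $\gamma$-basis non-negativity argument handles $n<0$ but does not constrain $n>0$ (where the two sides have matching signs). This is presumably the reason the definition of a set of root data bundles both conditions together.
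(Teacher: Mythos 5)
The paper itself contains no proof of this statement: it is imported verbatim as Proposition~1 of Moody--Pianzola, so there is nothing in-paper to compare your argument against line by line. Your blind derivation from the root-data axioms as stated is correct: writing a putative nonzero element as $n\alpha_k=\sum_{j\neq k}m_j\alpha_j$ with $m_j\in\mathbb{Z}_{\geqslant0}$, pairing with $\alpha_k^\vee$ gives $2n=\sum_{j\neq k}m_jA_{jk}\leqslant 0$ (using $\langle\alpha_i,\alpha_j^\vee\rangle=A_{ij}$, $A_{kk}=2$, $A_{jk}\leqslant0$ for $j\neq k$), which rules out $n>0$; and for $n<0$, expanding $|n|\alpha_k+\sum_{j\neq k}m_j\alpha_j=0$ in the $\mathbb{Z}$-basis $\{\gamma_i\}$ of $Q$ and using $\Pi\subset\bigoplus_I\mathbb{Z}_{\geqslant0}\gamma_i$ forces every coordinate of $\alpha_k$ to vanish, contradicting $\langle\alpha_k,\alpha_k^\vee\rangle=2$; the coroot half is indeed symmetric via $\Pi^\vee\subset\bigoplus_I\mathbb{Z}_{\geqslant0}\gamma_i^\vee$. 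Your closing observation is also on point: the GCM sign condition alone handles only one sign of $n$ and the $\gamma$-coordinate non-negativity only the other, which is precisely why the definition of a set of root data bundles both requirements.
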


In \cite[end of Section~7]{MooPiaInfRootSystems} the following procedure for finding a (weak) basis $\Upsilon$ of a subsystem $\Omega$ is indicated:
\begin{enumerate}
\item Fix a well-ordering $\beta_1,\beta_2,\ldots$ on $\Omega_\mathrm{re}^+$, which respects the height.
\item $\beta_1\in\Upsilon$.
\item For $n>1$ set $\beta_n\in\Upsilon$ if and only if $\beta_n\notin\sum_{i=1}^{n-1}\mathbb{Z}_{\geqslant0}\beta_i$.
\end{enumerate}
Checking the weak independence is done by means of integer linear programming. Namely, we use the following variant of integer Farkas' lemma.
\begin{lemma}[Integer Farkas' lemma]
Given $A\in M(m,n,\mathbb{Z})$ and $b\in\mathbb{Z}^m$,
\[ \forall x\in\mathbb{Z}_{\geqslant0}^n\ \ \forall r\in\mathbb{Z}_{>0}\ Ax\neq br \Longleftrightarrow \exists y\in\mathbb{Z}^m\ \ \text{such that}\ \ y^\top A\geqslant0,\ y^\top\cdot b<0. \]
\end{lemma}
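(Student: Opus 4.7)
The plan is to reduce the claim to the classical (real) Farkas lemma, using the rationality of $A$ and $b$ to pass between integer and real feasibility. The direction $(\Leftarrow)$ is routine: given $y\in\mathbb{Z}^m$ with $y^\top A\geqslant 0$ and $y^\top b<0$, any integer solution $x\in\mathbb{Z}_{\geqslant0}^n$, $r\in\mathbb{Z}_{>0}$ of $Ax=br$ would produce
\[ 0\leqslant (y^\top A)\,x = y^\top(br) = r\cdot y^\top b<0, \]
a contradiction.

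For $(\Rightarrow)$, I first rewrite the hypothesis. The substitution $x'=x/r$, together with its inverse (clearing denominators through a common denominator $r$), shows that the existence of integers $x\geqslant 0$, $r>0$ with $Ax=br$ is equivalent to the existence of a rational $x'\geqslant 0$ with $Ax'=b$. Thus the hypothesis becomes: the rational polyhedron $P=\{x'\in\mathbb{R}_{\geqslant 0}^n\mid Ax'=b\}$ contains no rational point. I then invoke the standard fact that a rational polyhedron is $\mathbb{Q}$-empty iff it is $\mathbb{R}$-empty (for instance via Fourier–Motzkin elimination, which preserves rationality). So $P=\varnothing$ over $\mathbb{R}$, and classical Farkas supplies some $y\in\mathbb{R}^m$ with $y^\top A\geqslant 0$ and $y^\top b<0$.

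To upgrade $y$ to $\mathbb{Z}^m$, I rescale so that $y^\top b\leqslant -1$, placing $y$ in the closed rational polyhedron $\{z\in\mathbb{R}^m\mid z^\top A\geqslant 0,\ z^\top b\leqslant -1\}$. By the same rationality principle this polyhedron contains a rational point, and multiplying through by a common positive denominator gives an integer $y$ still satisfying $y^\top A\geqslant 0$ and $y^\top b<0$.

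The main — and essentially only — obstacle is this rationality principle, namely that $\mathbb{Q}$- and $\mathbb{R}$-feasibility of a rational linear system coincide. It is classical and I would cite a standard reference from polyhedral combinatorics; everything else is a short direct calculation together with the real Farkas lemma.
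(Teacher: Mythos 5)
Your argument is correct. Note, however, that the paper does not prove this lemma at all: it is stated as a known variant of Farkas' lemma and used as a black box for the integer-programming certificates, so there is no proof of record to compare against. Your two directions are sound: the easy implication is the standard pairing computation $0\leqslant (y^\top A)x=r\,y^\top b<0$; for the converse, the reduction of ``no $x\in\mathbb{Z}_{\geqslant0}^n$, $r\in\mathbb{Z}_{>0}$ with $Ax=br$'' to the emptiness of $\{x'\in\mathbb{Q}_{\geqslant0}^n\mid Ax'=b\}$ by clearing denominators is exactly right, and the passage $\mathbb{Q}$-emptiness $\Leftrightarrow$ $\mathbb{R}$-emptiness for rational polyhedra is indeed classical (e.g.\ Schrijver, \emph{Theory of Linear and Integer Programming}), as is the final rational-point-plus-scaling step that upgrades the real Farkas certificate to an integral one. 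One small streamlining you could make: Farkas' lemma itself holds over any ordered field, in particular over $\mathbb{Q}$ (the Fourier--Motzkin or inductive proofs never leave the field generated by the data), so you can apply it directly to the rational system and clear denominators once, avoiding the separate real-to-rational transfer for both the primal and the dual polyhedron. Either way, the proof is complete modulo standard references.
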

Here inequalities of the form ``$\geqslant0$'', ``$<0$'', etc. are applied entry-wise. Note that setting $A$ to be the matrix with columns formed by the coefficient of the first $n$ roots $\gamma_1,\ldots,\gamma_n$ in $\Upsilon$ and $b$ the coefficients column of $\beta$ the left hand side of the equivalence expresses the weak independence of $\beta$ with respect to $\gamma_1,\ldots,\gamma_n$.

Note also that to check the weak independence for a set $\Gamma$ of positive roots it is enough to only check that $m\gamma$, $\gamma\in\Gamma$, $m\neq0$, cannot be expressed as a nonnegative integer linear combination of positive roots $\beta\in\Gamma$ of smaller height than $\gamma$.

So this iterative procedure will reach every element of a basis in finite time. However, \textit{a priori} there is no definite termination point, when one can be sure that all the elements of a basis have been found.

We propose the following procedure to certify the computation of a basis, depending on the rank of $\Phi$. This procedure covers most of the cases, but leaves some for additional manual calculation. Each such case which can be worked out is elaborated below. The notation is as follows: we display the Dynkin diagram of $\Phi$, the matrix of the bilinear form $B$ in the basis of the fundamental roots (which is the symmetrization of the Cartan matrix of $\Phi$), and the root $\alpha$ (meaning that we calculate $Z_s(\alpha)$).

The calculations for the hyperbolic root systems are summarized in the Appendices. The order of appearance of the root systems coincides with the order in the classification in \cite{CarboneEtAlHyperbolicClassification}. The numbering of the root systems is different, for we are only interested in the $142$ symmetrizable systems. The numbering of the fundamental roots is mostly the same, with a few exceptions (those dealt in \cref{sec:infinite rank centralizers}). The particular choice of $\alpha$ is such that the number of the fundamental roots of $\Phi$ lying in $Z_s(\alpha)$ is maximized and also such that the symmetry of the diagram is most preserved. We choose only one fundamental root in each Weyl group orbit, and we also unite roots mapped to each other by a diagram automorphism.

\subsection{Root systems of rank $3$} \label{sec:rk3}
\begin{lemma}
There is no rank $2$ root system in dimension $1$.
\end{lemma}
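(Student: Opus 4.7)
The plan is to show that any rank $2$ GCM, when one tries to realize it inside a $1$-dimensional space, is incompatible with the weak independence property of \cref{prop:wip} that is built into the definition of root data.

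First I would fix coordinates. If $\dim V = 1$, nondegeneracy of the pairing forces $\dim V^\vee = 1$ as well, so both spaces can be identified with $\mathbb{R}$ with $\langle v,w\rangle = vw$. Under this identification the two fundamental roots and coroots become nonzero real numbers $\alpha_1,\alpha_2,\alpha_1^\vee,\alpha_2^\vee$, and $A_{ii}=2$ gives $\alpha_i^\vee = 2/\alpha_i$. Substituting into $A_{ij} = \langle\alpha_i,\alpha_j^\vee\rangle$ then yields $A_{12}=2\alpha_1/\alpha_2$ and $A_{21}=2\alpha_2/\alpha_1$, so the two off-diagonal entries have the same sign and their product equals $4$.

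Next I would analyse cases on the GCM. The case $A_{12}=A_{21}=0$ immediately forces $\alpha_1=0$, contradicting $\alpha_1\alpha_1^\vee=2$. Otherwise $A_{12}=-a$, $A_{21}=-b$ with $a,b>0$ integers satisfying $ab=4$, so $(a,b)\in\{(1,4),(2,2),(4,1)\}$; in each such case the relation $2\alpha_1/\alpha_2=-a$ yields $\alpha_1=-(a/2)\alpha_2$, i.e.\ the two fundamental roots are proportional over $\mathbb{Q}$ with a strictly negative ratio.

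The final step is to extract the contradiction. The equation $2\alpha_1=-a\alpha_2$ rewrites as $-2\alpha_1 = a\alpha_2$, exhibiting a nonzero element of $\mathbb{Z}\alpha_1 \cap \mathbb{Z}_{\geqslant 0}\alpha_2$ and thus violating the $k=1$ instance of \cref{prop:wip}. The only mildly delicate point is the degenerate corner $a=b=0$, which has to be disposed of by a separate line rather than by the main formula; once that is out of the way the argument is purely computational and the main obstacle is really just bookkeeping the finitely many $(a,b)$ allowed by $ab=4$.
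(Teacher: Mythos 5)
Your proof is correct and takes essentially the same route as the paper's: realize both spaces as $\mathbb{R}$, compute the pairing to get $2\alpha_1=A_{12}\alpha_2$ (hence $A_{12}A_{21}=4$), and conclude that the two fundamental roots are negatively proportional, which violates the weak independence property. The only cosmetic differences are your normalization of the pairing to plain multiplication and your separate treatment of the case $A_{12}=A_{21}=0$, which in the paper is excluded automatically by the relation $ab=4$.
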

\begin{proof}
Suppose that the rank $2$ root system with the fundamental roots $\alpha$ and $\beta$ and GCM $\begin{psmallmatrix}
2 & -b \\ -a & 2
\end{psmallmatrix}$ is realized in dimension $1$. That is, $V=\langle v\rangle$ and $V^\vee=\langle v^\vee\rangle$, so that for some non-zero real numbers $\lambda, \lambda^\vee, \mu, \mu^\vee$ one has
\[ \alpha = \lambda v,\quad \alpha^\vee = \lambda^\vee v^\vee,\quad \beta = \mu v,\quad \beta^\vee = \mu^\vee v^\vee. \]
Then one can calculate the values of the pairing $\langle \cdot,\cdot\rangle$ on the pairs of roots:
\begin{align*}
& 2 = \langle \alpha, \alpha^\vee \rangle = \lambda\lambda^\vee\langle v,v^\vee\rangle,
&& -b = \langle \alpha,\beta^\vee \rangle = \lambda\mu^\vee\langle v,v^\vee\rangle, \\
& 2 = \langle \beta, \beta^\vee \rangle = \mu\mu^\vee\langle v,v^\vee\rangle,
&& -a = \langle \beta,\alpha^\vee \rangle = \mu\lambda^\vee\langle v,v^\vee\rangle.
\end{align*}
Denote $\nu=\langle v,v^\vee\rangle$. Note that $\nu\neq0$ by the non-degeneracy of the pairing. Then
\[ \lambda^\vee = \frac{2}{\lambda\nu}, \qquad \mu^\vee = \frac{2}{\mu\nu}, \qquad \frac{-b}{\nu} = \lambda\mu^\vee = \frac{2\lambda}{\mu\nu}, \qquad \frac{-a}{\nu} = \mu\lambda^\vee = \frac{2\mu}{\lambda\nu}. \]
Hence $2\lambda=-b\mu$ and $2\mu=-a\lambda$, and so $2\lambda = \frac{ab\lambda}{2}$, thus $ab=4$.

If $(a,b)=(2,2)$, then $\lambda=-\mu$. If $(a,b)=(1,4)$, then $\lambda=-2\mu$. In both cases the weak independence property fails, a contadiction.
\end{proof}
\begin{lemma}
There is no rank $3$ root system in dimension $2$.
\end{lemma}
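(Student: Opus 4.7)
My plan is to produce a non-trivial integer linear relation among the three fundamental roots and then contradict the weak independence property (\cref{prop:wip}). Let $\alpha_1,\alpha_2,\alpha_3$ be the fundamental roots and $\alpha_1^\vee,\alpha_2^\vee,\alpha_3^\vee$ the coroots of the putative rank-$3$ system in $V$ with $\dim V=2$, so that $\dim V^\vee=2$ as well by non-degeneracy of the pairing.

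\emph{First}, I would show that any two of the fundamental roots are $\mathbb{R}$-linearly independent, and likewise any two coroots. Indeed, if $\alpha_i=\lambda\alpha_j$ then pairing with $\alpha_j^\vee$ gives $\lambda=A_{ij}/2$, a non-zero (since $\alpha_i\neq 0$) non-positive (since $A_{ij}\leqslant 0$) rational number. Writing $\lambda=-p/q$ with $p,q\in\mathbb{Z}_{>0}$, the equality $-q\alpha_i=p\alpha_j$ exhibits a non-zero element of $\mathbb{Z}\alpha_i\cap\mathbb{Z}_{\geqslant 0}\alpha_j$, contradicting \cref{prop:wip}. In particular the three coroots are pairwise independent vectors in the $2$-dimensional space $V^\vee$, so any two of them already span it.

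\emph{Second}, I would extract a non-trivial integer relation $\sum c_i\alpha_i=0$. Since $\dim V=2$ the three roots are $\mathbb{R}$-linearly dependent, and pairing a real dependence with the $\alpha_j^\vee$ shows it is equivalent to $(c_1,c_2,c_3)$ lying in the left kernel of the Cartan matrix $A$---the converse implication using that the coroots span $V^\vee$ by the first step. The Cartan matrix, whose rows are the linear functionals $\langle\alpha_i,\cdot\rangle$ on $V^\vee$ evaluated at the $\alpha_j^\vee$, satisfies $\mathrm{rank}\,A\leqslant\dim V=2$; being integer-valued with non-trivial left kernel, it then admits a non-zero vector $(c_1,c_2,c_3)\in\mathbb{Z}^3$ there, which supplies the desired relation.

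\emph{Finally}, I would finish by a sign analysis. By pigeonhole at least two of $c_1,c_2,c_3$ share a common sign (both $\geqslant 0$ or both $\leqslant 0$); let $k$ be the remaining index, and after possibly negating the whole relation assume $c_j\leqslant 0$ for $j\neq k$. Then
\[
c_k\alpha_k=\sum_{j\neq k}(-c_j)\alpha_j\in\sum_{j\neq k}\mathbb{Z}_{\geqslant 0}\alpha_j,
\]
and $c_k\neq 0$---otherwise the relation reduces to a non-trivial integer dependence between the two remaining fundamental roots, contradicting the first step. Hence this is a non-zero element of $\mathbb{Z}\alpha_k\cap\sum_{j\neq k}\mathbb{Z}_{\geqslant 0}\alpha_j$, contradicting \cref{prop:wip}. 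The delicate point is precisely this sign bookkeeping combined with dispatching the $c_k=0$ sub-case, which is exactly what requires the pairwise independence established in the first step.
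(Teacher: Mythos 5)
Your proof is correct, but it takes a genuinely different route from the paper's. You extract a non-zero integer relation $\sum_i c_i\alpha_i=0$ from $\dim V=2$: the real dependence forces the integer Cartan matrix to have rank $\leqslant 2$, hence a non-zero integer vector in its left kernel, which translates back into a relation in $V$ because the pairwise-independent coroots span the two-dimensional $V^\vee$; you then contradict the weak independence property (\cref{prop:wip}) by the sign argument, with the degenerate case $c_k=0$ dispatched by the pairwise independence established first. The paper argues instead geometrically: it considers the three rank-$2$ subsystems spanned by pairs of fundamental roots, and either all are infinite, in which case their imaginary cones cannot be arranged compatibly in the plane (\cref{fig:rk3dim2}), or one is finite, forcing the bilinear form to be positive definite and $\Phi$ to be one of the five finite planar root systems, all of rank $\leqslant 2$. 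Your argument is shorter and uniform (no case split), uses neither the bilinear form, nor symmetrizability, nor the structure of rank-$2$ affine/hyperbolic systems, and is the natural continuation of the paper's own proof of the rank-$2$-in-dimension-$1$ lemma, which likewise ends by violating weak independence; the paper's proof, in exchange, exhibits the planar cone picture and reuses known rank-$2$ structure theory. Two details you should state explicitly: $\alpha_i\neq0$ (from $\langle\alpha_i,\alpha_i^\vee\rangle=2$), which you use repeatedly, and the fact that your pigeonhole-on-signs step is special to three roots (it works because $\mathbb{Z}\alpha_k$ allows negative multiples, so only the two remaining coefficients need a common sign), which is all that is needed here.
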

\begin{proof}
Suppose that the rank $3$ root system $\Phi$ with the fundamental roots $\alpha,\beta,\gamma$ is realized in dimension $2$. Then, in particular, each of the $3$ subsystems
\[ (\mathbb{Z}\alpha + \mathbb{Z}\beta)\cap\Phi, \qquad (\mathbb{Z}\alpha + \mathbb{Z}\gamma)\cap\Phi, \qquad (\mathbb{Z}\beta + \mathbb{Z}\gamma)\cap\Phi \]
is a rank $2$ root system in dimension $2$.

Assume first that all of these subsystems are infinite. Then by the structure of affine and hyperbolic rank $2$ root systems~\cite{CarboneEtAlRank2Hyperbolic} each one posesses a non-empty cone of imaginary roots, lying between the two fundamental roots. This cones form one of the configurations shown at \cref{fig:rk3dim2}. Since these root systems share the bilinear form, both options are impossible (in one case, the positive part of the cone is disconnected, in the other one of the fundamental roots is not real).
\begin{figure}[h]
\centering
\includegraphics[]{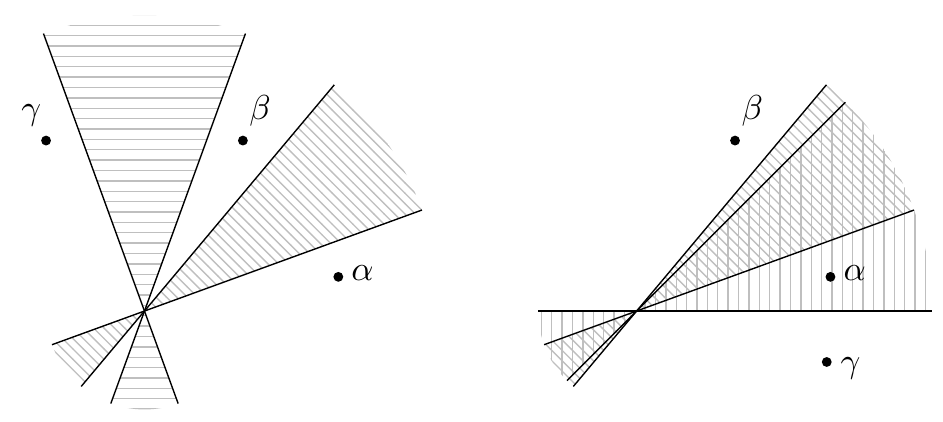}
\caption{Possible planar configurations of $\alpha$, $\beta$ and $\gamma$ (up to reordering) and the imaginary cones of the corresponding rank $2$ subsystems.\label{fig:rk3dim2}}
\end{figure}

If one of these subsystems if finite, then the bilinear form on the ambient $2$-dimensional space is positive-definite, thus $\Phi$ is finite, but in dimension $2$ there are only five non-empty finite root systems ($\rA_1$, $\rA_1\times\rA_1$, $\rA_2$, $\rC_2$, $\rG_2$), all of rank $\leqslant2$.
\end{proof}
As a corollary, once one finds two weakly independent roots in $Z_s(\alpha)$, one can stop, since this subsystem is realized in the $2$-dimensional space $\langle \alpha^\vee \rangle^\perp$. In fact, it is enough to find \emph{any} two roots, for otherwise one gets an $\rA_1$ root system or no roots at all.

All centralizers in hyperbolic root systems of rank $3$ are listed in \cref{app:rank3}. Below we discuss how to certify the basis of $Z_s(\alpha)$ when it is either $\rA_1$ or empty.

\subsubsection{Centralizers of rank $<2$}
Let us now consider the case when there are no two distinct positive roots $\beta\in Z_s(\alpha)$.

Denote $\beta=x_1\alpha_1+x_2\alpha_2+x_3\alpha_3$. Since such $\beta$ is orthogonal to $\alpha$, the coefficients $x_i$ must satisfy the equation of the form
\[ c_1x_1+c_2x_2+c_3x_3 = 0 \]
for some integers $c_i$ (they form a column equal to $B\alpha$). Thus one can eliminate one of the variables, say, $x_1$, from the ternary quadratic form $(\beta,\beta)\in\mathbb{Z}[x_1,x_2,x_3]$ to obtain a binary quadratic form $f$ in variables $x_2,x_3$. In most cases this quadratic form factors into the product of two degree $1$ terms. If so, one can find all non-negative integer solutions to the equations $f(y,z)=|\alpha_i|^2$ for each $i$ (there are only finitely many solutions) and then check whether they deliver a root $\beta$ that is strictly orthogonal to $\alpha$ or, indeed, define a real root at all.

The three remaining cases when no root $\beta\in Z_s(\alpha)$ can be found and the quadratic form $f$ does not factorize are dealt with below.

\paragraph{Case}
\[
\begin{tikzpicture}[baseline={(a1.base)}]
\dv{0}{(120:.75)}{1}\dv{1}{(0:.75)}{2}\dv{2}{(-120:.75)}{3}
\deCtwo{a1}{a0}\deA{a0}{a2}\deCtwo{a1}{a2}
\end{tikzpicture}
\qquad
B = \begin{pmatrix}
2 & -1 & -1 \\ -1 & 1 & -1 \\ -1 & -1 & 2
\end{pmatrix}
\qquad
\alpha = \alpha_2
\qquad
|\beta|^2=1
\]
Compute $0=(\beta,\alpha) = -x+y-z$ and $1=(\beta,\beta) = 2x^2-2xy-2xz+y^2-2yz+2z^2$, the latter simplifies (modulo the former) to $6x^2-6xy+y^2=1$. This implies $y\equiv 1 \pmod{2}$. If $\beta$ is a real root orthogonal to $\alpha$, then $|\beta+\alpha|^2 = |\beta|^2+|\alpha|^2 = 2$, so $\beta+\alpha$ being a real root is equvalent to
\[ 2x\divby2,\quad y+1\divby2,\quad 2z\divby2, \]
which are all satisfied. Hence no root $\beta$ is strictly orthogonal to $\alpha$.

\paragraph{Case}
\[
\begin{tikzpicture}[baseline={(a1.base)}]
\dv{0}{(120:.75)}{1}\dv{1}{(0:.75)}{2}\dv{2}{(-120:.75)}{3}
\deCtwo{a0}{a1}\deA{a0}{a2}\deCtwo{a2}{a1}
\end{tikzpicture}
\qquad
B = \begin{pmatrix}
2 & -2 & -1 \\ -2 & 4 & -2 \\ -1 & -2 & 2
\end{pmatrix}
\qquad
\alpha = \alpha_1
\qquad
|\beta|^2 = 2
\]
Again, such $\beta\in Z_s(\alpha)$ must satisfy $2x-2y-z=0$ and hence $6x^2-24xy+20y^2=2$, which implies $2x^2\equiv2\pmod{4}$, hence $x$ is odd. Now $|\beta+\alpha|^2=4$, and $\beta+\alpha$ is a real root if
\[ 2(x+1)\divby4,\quad 4y\divby4,\quad 2z\divby4, \]
and these conditions are always satisfied.

\paragraph{Case}
\[
\begin{tikzpicture}[baseline={(a1.base)}]
\dv{0}{(120:.75)}{1}\dv{1}{(0:.75)}{2}\dv{2}{(-120:.75)}{3}
\deCtwo{a1}{a0}\deAoo{a0}{a2}\deCtwo{a1}{a2}
\end{tikzpicture}
\qquad
B = \begin{pmatrix}
2 & -1 & -2 \\ -1 & 1 & -1 \\ -2 & -1 & 2
\end{pmatrix}
\qquad
\alpha = \alpha_2
\qquad
|\beta|^2 = 1
\]
$(\beta,\alpha) = -x+y-z$, so $(\beta,\beta) = 8x^2-8xy+y^2=1$, thus $y$ is odd. Since $|\beta+\alpha|^2=2$ and
\[ 2x\divby2,\quad y+1\divby2,\quad 2z\divby2, \]
$\beta+\alpha$ is a real root.

\subsection{Root systems of rank $\geqslant4$} \label{sec:rk-geq4}
For $v\in V^\vee\setminus\{0\}$ define the hyperplane and two half-spaces
\[
H_v=v^\perp=\{x\in V\mid \langle x, v\rangle=0\},\quad
H_v^\pm=\{x\in V\mid \langle x, v\rangle\in\pm\mathbb{R}_{>0}\}
\]
Define the fundamental chamber for $\Phi$ as
\[ F = \{x\in V\mid \langle x, \alpha_j^\vee\rangle >0 \text{ for all } j\in J\}. \]
For a set $\mathcal{H}$ of hyperplanes define the equivalence relation $\sim_\mathcal{H}$ as
\[ x\sim_\mathcal{H}y\quad \Leftrightarrow \quad \text{for every $H\in\mathcal{H}$ either $x,y\in H$ or $x,y\in H^+$ or $x,y\in H^-$}. \]
For a subsystem $\Omega$ of $\Phi$ set $\mathcal{H}(\Omega) = \{ H_{\alpha^\vee} \mid \alpha\in\Omega \}$. Define the fundamental chamber for $\Omega$ as
\[ F(\Omega) = \{x\in V\mid \langle x, \alpha^\vee\rangle >0 \text{ for all } \alpha\in\Omega^+\}. \]
The equivalence relation $\sim_{\mathcal{H}(\Omega)}$ is coarser than $\sim_{\mathcal{H}(\Phi)}$, and hence $F(\Omega)\supset F(\Phi)$. Now $F(\Phi)$ can be defined in term of its fundamental roots as
\[ F(\Phi) = \{x\in V\mid \langle x, \alpha_j^\vee\rangle >0 \text{ for all } j\in J\}. \]
Suppose that $\Gamma\subset\Omega^+$ is a set of roots. They determine a convex cone
\[ C(\Gamma) = \{x\in V\mid \langle x, \alpha^\vee\rangle >0 \text{ for all } \alpha\in\Gamma\}. \]
If $\Gamma$ is a basis for $\Omega$, then $C(\Gamma)=F(\Omega)$, i.e. there is no $\beta\in\Omega$ such that $H_{\beta^\vee}$ cuts $C(\Gamma)$.

Given the chain of closed convex polyhedral cones inclusions $\overline{F(\Phi)}\subseteq \overline{F(\Omega)}\subseteq \overline{C(\Gamma)}$, one can represent each of $F=\overline{F(\Phi)}$ and $C=\overline{C(\Gamma)}$ as the conical hull of a set of vectors:
\[
F = \mathbb{R}_{\geqslant0}r_1+\ldots+\mathbb{R}_{\geqslant0}r_n,\quad
C = \mathbb{R}_{\geqslant0}s_1+\ldots+\mathbb{R}_{\geqslant0}s_k.
\]
With the bounding hyperplanes of the convex polyhedral cone known, the spanning vectors $r_i$ can be calculated via the Double Description method \cite{MotzkinDoubleDescription,FukudaDoubleDescription}.

Now for each face of $F$ one can determine its supporting hyperplane $H$ and find $\alpha\in\Phi$ such that $H=H_{\alpha^\vee}$. Then, if $\alpha\notin\Omega$, the reflection $\sigma$ of $F$ with respect to $H$ leaves the images $\sigma(r_1),\ldots,\sigma(r_n)$ inside $\overline{F(\Omega)}$. One can repeat this procedure for each of the cones $\sigma(F)$ and for each of its faces. If one reaches $s_i$ without ever crossing hyperplanes from $\mathcal{H}(\Omega)$, then one proves that $s_i\in\overline{F(\Omega)}$. If $s_i\in\overline{C(\Gamma)}$ for all $i$, then $C\subseteq\overline{F(\Omega)}$, and $\Gamma$ is the basis of $\Omega$.

However, this procedure never works as stated for $\Omega=Z_s(\alpha)$ and infinite $\Phi$, because in this case $\overline{F(\Omega)}$ contains the line $\mathbb{R}\alpha$, while the Tits cone $\cup_{w\in W}w\overline{F(\Phi)}$ is pointed.

Note that for any $\beta\in\Omega$ one has $\alpha\in H_{\beta^\vee}$, so the hyperplanes from $\mathcal{H}(\Omega)$ cutting $C$ can already be seen on $H_{\alpha^\vee}$. Thus one can look at projections $s_i'$ and $r_i'$ of $s_i$ and $r_i$ onto $H_{\alpha^\vee}$, as well as the projections of all $\sigma(r_i)$, see \cref{fig:cones-proj}.

\begin{figure}[h]
\centering
\includegraphics[]{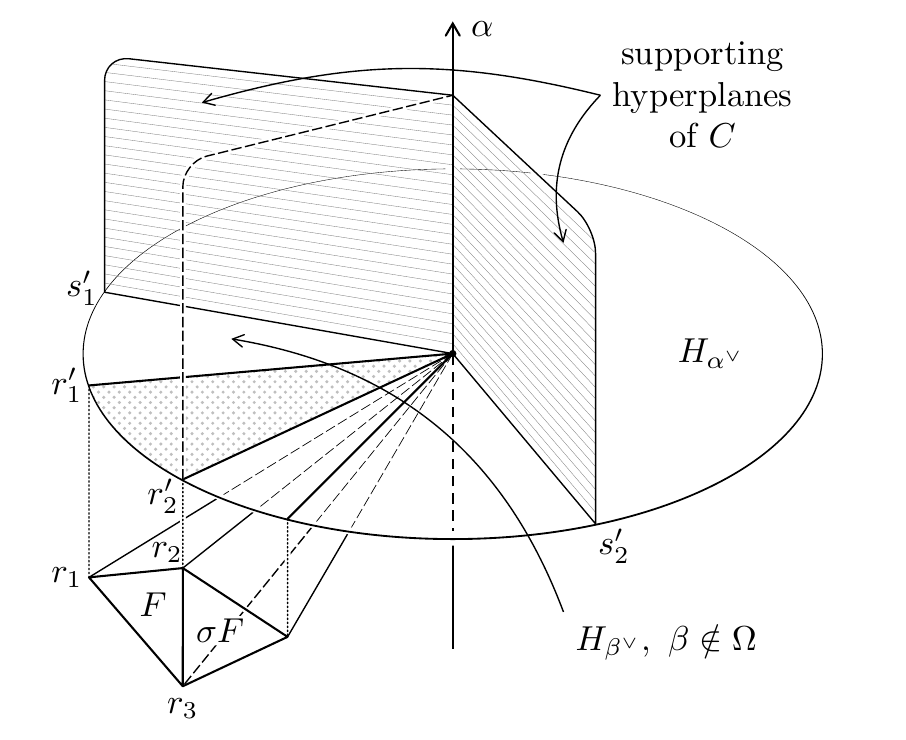}
\caption{The convex cone $F$, spanned by three rays $r_1$, $r_2$, $r_3$, and its reflection $\sigma F$ with respect to $H_{\beta^\vee}$. Once all of $s_i'$ are in the expanding list of $r_i'$, it is proved that $\Gamma$ is a basis for $\Omega$.\label{fig:cones-proj}}
\end{figure}

The bases and the diagrams for all symmetrizable root systems of rank $\geqslant4$ are listed in \cref{app:rank-geq4}, with the exceptions discussed in \cref{sec:infinite rank centralizers}.

The procedure described above allows to certify the basis for most hyperbolic root systems, but fails to work in two extreme cases: no roots in $\Omega$ or too many roots in $\Upsilon$.

\subsubsection{Empty centralizers}
\paragraph{Case}
\[
\begin{tikzpicture}[baseline={($(current bounding box.center)+(0,-4pt)$)}]
\dv{1}{(0,1)}{1}\dv{2}{(1,1)}{2}\dv{3}{(1,0)}{3}\dv{4}{(0,0)}{4}
\deA{a1}{a2}\deA{a1}{a3}\deA{a1}{a4}\deA{a2}{a3}\deA{a2}{a4}\deA{a3}{a4}
\end{tikzpicture}
\qquad
B = \begin{pmatrix}
2&-1&-1&-1\\-1&2&-1&-1\\-1&-1&2&-1\\-1&-1&-1&2
\end{pmatrix}
\qquad
\alpha=\alpha_1
\]
The root $\beta=x\alpha_1+y\alpha_2+z\alpha_3+w\alpha_4$ is orthogonal to $\alpha_1$ if $2x-y-z-w=0$. Substituting the expression for $w$ into the quadratic form $(\beta,\beta)$ one gets that
\[ 2 = |\beta|^2 = 6(x^2-2xy-2xz+y^2+yz+z^2), \]
which is impossible for integer $x,y,z$, thus $Z_s(\alpha_1) = \varnothing$.

The above argument works for any complete graph on $\geqslant2$ points (including $\rA_2$ and $\rA_2^{(1)}$).

\paragraph{Case}
\[
\begin{tikzpicture}[baseline={(a4.base)}]
\dv{1}{(120:1)}{1}\dv{2}{(-120:1)}{2}\dv{3}{(1,0)}{3}\dv{4}{(0,0)}{4}
\deCtwo{a4}{a1}\deCtwo{a4}{a2}\deCtwo{a4}{a3}
\end{tikzpicture}
\qquad
B = \begin{pmatrix}
2 & 0 & 0 & -2 \\ 0 & 2 & 0 & -2 \\ 0 & 0 & 2 & -2 \\ -2 & -2 & -2 & 4
\end{pmatrix}
\qquad
\alpha=\alpha_4
\]
The root $\beta=x\alpha_1+y\alpha_2+z\alpha_3+w\alpha_4$ is orthogonal to $\alpha_4$ if $2w=x+y+z$. Then
\[ 2 = |\beta|^2 = x(x-2y-2z)+(y-z)^2. \]
This equation has no integer solutions, because it has no solutions modulo $4$. Hence $Z_s(\alpha_4) = \varnothing$.

\subsubsection{Infinite rank centralizers} \label{sec:infinite rank centralizers}
In case the Dynkin diagram of $\Phi$ is of the form ``simply-laced cycle + something'', the procedure continues finding new fundamental roots for $Z_s(\alpha)$ indefinitely. This phenomena also occurs for root systems with multiply-laced Dynkin diagrams obtained from those described above by the operation of ``diagram folding''. Below we describe several cases when $Z_s(\alpha)$ has infinite rank.

\paragraph{Case}
%\begin{center}
\[
\begin{tikzpicture}[baseline={(current bounding box.center)}]
\dv{0}{(0,0)}{1}
\dv{1}{(30:1)}{2}
\dv{2}{(-30:1)}{3}
\dv{3}{(-1,0)}{4}
\deA{a0}{a1}\deA{a0}{a2}\deA{a0}{a3}\deA{a1}{a2}
\end{tikzpicture}
\]
%\end{center}
The basis for $Z_s(\alpha_4)$ is provided by the family of roots
%\[ \beta_n = \begin{pmatrix}
%6n(n+1) \\ n(3n+2) \\ (n+1)(3n+1) \\ 3n(n+1)
%\end{pmatrix},\quad n\in\mathbb{Z}_{\geqslant0} \]
\[ \beta_n = \aenroot{\&\& n(3n+2) \\ 3n(n+1) \& 6n(n+1) \\ \&\& (n+1)(3n+1) \\}{(r-2-1)--(r-2-2)--(r-1-3)--(r-3-3)--(r-2-2)},\quad n\in\mathbb{Z}_{\geqslant0} \]
and those obtained from these by swapping the coefficients of $\alpha_2$ and $\alpha_3$ (we call them $\beta'_n$). In particular, $\beta_0=\alpha_3$ and $\beta'_0=\alpha_2$. Set
\begin{gather*}
y = (-n,n,n-1,1),\qquad y' = (-n,n-1,n,1),\\
\intertext{so that for $m<n$}
y\cdot\beta_m = n-m-1 \geqslant 0, \qquad y\cdot\beta'_m = m+n \geqslant 0, \qquad y\cdot\beta_n = -1 < 0, \\
y'\cdot\beta_m = m+n \geqslant 0, \qquad y'\cdot\beta'_m = n-m-1 \geqslant 0, \qquad y'\cdot\beta'_m = -1 < 0.
\end{gather*}
Hence by Integer Farkas lemma $\beta_n$ and $\beta'_n$ cannot be expressed in terms of $\beta_m,\beta'_m$, $m<n$. Since $\hei(\beta_n)=\hei(\beta'_n)$, we see that $\beta_1,\beta'_1,\ldots,\beta_n,\beta'_n$ are weakly independent.

%To show that $\Upsilon = \{ \beta_n,\beta'_n \mid n\in\mathbb{Z}_{\geqslant0} \}$ form a basis, we emulate the procedure described in \cref{sec:rk-geq4}. Namely, set
%\[ \Upsilon_n = \{ \beta_k,\beta'_k \mid k=0,\ldots,n \},\quad C_n = \operatorname{pr}_{H_{\alpha_4^\vee}}\left(\overline{C(\Upsilon_n)}\right),\quad n\in\mathbb{Z}_{>0}. \]
%Then the rays of $C_n$ are
%\[ -\begin{pmatrix}
%12k^2+1 \\ 6k^2\mp2k+2 \\ 6k^2\pm2k+2 \\ 6k^2-1
%\end{pmatrix},\quad k=0,\ldots,n,\qquad\text{and}\qquad
%-\begin{pmatrix}
%12n^2+12n-2 \\ 6n^2+6n \\ 6n^2+6n \\ 6n^2+6n-1
%\end{pmatrix}. \]

%Indeed, $\beta=x\alpha_1+y\alpha_2+z\alpha_3+w\alpha_4$ is orthogonal to $\alpha_1$ if $2x=y$, thus
%\[ \frac{1}{2}(\beta,\beta) = 3x^2 - (2x+z)w -2xz + w^2 + z^2 = 1. \]
%Solving this equation modulo $3$ and $4$ shows that $x\divby3$ and $x\divby2$.

\paragraph{Case}
\[
\begin{tikzpicture}[baseline={(current bounding box.center)}]
\dv{4}{(-1,0)}{5}
\dv{0}{(0,0)}{1}
\dv{1}{(45:1)}{2}
\dv{2}{($(45:1)+(-45:1)$)}{3}
\dv{3}{(-45:1)}{4}
\deA{a0}{a1}
\deA{a1}{a2}
\deA{a2}{a3}
\deA{a3}{a0}
\deA{a4}{a0}
\end{tikzpicture}
\]
The basis for $Z_s(\alpha_5)$ is given by $\alpha_2,\alpha_3,\alpha_4,2\alpha_1+\alpha_2+\alpha_4+\alpha_5$ together with the family of roots
\[
\aenroot{\& \& n(2n+1) \\ 2n(n+1) \& 4n(n+1) \& \& 2n(n+1) \\ \& \& (n+1)(2n+1) \\}{(r-2-1)--(r-2-2)--(r-1-3)--(r-2-4)--(r-3-3)--(r-2-2)},\quad n\in\mathbb{Z}_{\geqslant1}
\]
and those obtained frome these by swapping the coefficients for $\alpha_2$ and $\alpha_4$.

\paragraph{Case}
\[
\begin{tikzpicture}[baseline={(current bounding box.center)}]
\dv{5}{(-1,0)}{6}
\dv{0}{(0,0)}{1}
\dv{1}{(30:1)}{2}
\dv{2}{($(30:1)+(0:1)$)}{3}
\dv{3}{($(-30:1)+(0:1)$)}{4}
\dv{4}{(-30:1)}{5}
\deA{a0}{a1}
\deA{a1}{a2}
\deA{a2}{a3}
\deA{a3}{a4}
\deA{a4}{a0}
\deA{a5}{a0}
\end{tikzpicture}
\]
In this case experimental evidence suggests that the basis of $Z_s(\alpha_6)$ is not given by polynomials. It starts with $\alpha_2,\ldots,\alpha_5$ together with the following roots (and their symmetric images):

\begin{center}
\begin{tabular}{cccc}
$\rootAEn{2,1,0,0,1,1}$,
& $\rootAEn{6,2,3,4,5,3}$,
& $\rootAEn{20,8,9,11,13,10}$, \\[15pt]
$\rootAEn{40,17,19,21,24,20}$,
& $\rootAEn{66,29,32,35,38,33}$,
& $\rootAEn{100,45,48,52,56,50}$, \\[15pt]
$\rootAEn{140,64,68,72,77,70}$,
& $\rootAEn{186,86,91,96,101,93}$,
& $\rootAEn{240,112,117,123,129,120}$, \\[15pt]
$\rootAEn{300,141,147,153,160,150}$,
& $\rootAEn{368,174,180,187,195,184}$,
& $\ldots$
\end{tabular}
\end{center}

\paragraph{Case}
\[
\begin{tikzpicture}[baseline={(current bounding box.center)}]
\dv{6}{(-1,0)}{7}
\dv{0}{(0,0)}{1}
\dv{1}{(45:1)}{2}
\dv{2}{($(45:1)+(0:1)$)}{3}
\dv{3}{($(45:1)+(0:1)+(-45:1)$)}{4}
\dv{4}{($(-45:1)+(0:1)$)}{5}
\dv{5}{(-45:1)}{6}
\deA{a0}{a1}
\deA{a1}{a2}
\deA{a2}{a3}
\deA{a3}{a4}
\deA{a4}{a5}
\deA{a5}{a0}
\deA{a6}{a0}
\end{tikzpicture}
\]
The basis of $Z_s(\alpha_7)$ starts with $\alpha_2,\ldots,\alpha_6$ and the following roots (together with their symmetric images):
\begin{center}
\begin{tabular}{cccc}
$\rootAEn{2,1,0,0,0,1,1}$
& $\rootAEn{6,2,2,3,4,5,3}$
& $\rootAEn{18,7,8,9,10,12,9}$, \\[15pt]
$\rootAEn{36,15,16,18,20,22,18}$,
& $\rootAEn{60,26,28,30,32,35,30}$,
& $\rootAEn{90,40,42,45,48,51,45}$, \\[15pt]
$\rootAEn{126,57,60,63,66,70,63}$,
& $\rootAEn{168,77,80,84,88,92,84}$,
& $\rootAEn{216, 100, 104, 108, 112, 117, 108}$ \\[15pt]
$\rootAEn{270, 126, 130, 135, 140, 145, 135}$,
& $\rootAEn{330, 155, 160, 165, 170, 176, 165}$,
& $\ldots$
\end{tabular}
\end{center}
The roots listed above are covered (up to symmetry) by two disjoint polynomial families
\begin{align*}
& \aenroot{\&\& 6n^2-5n+1 \& 6n^2-4n \\ 3n(2n-1) \& 6n(2n-1) \&\&\& 3n(2n-1) \\ \&\& 6n^2-n \& 6n^2-2n \\}{(r-2-1)--(r-2-2)--(r-1-3)--(r-1-4)--(r-2-5)--(r-3-4)--(r-3-3)--(r-2-2)},\quad n\in\mathbb{Z}_{\geqslant1}, \\
& \aenroot{\&\& 6n^2+n \& 6n^2+2n \\ 3n(2n+1) \& 6n(2n+1) \&\&\& 3n(2n+1) \\ \&\& 6n^2+5n+1 \& 6n^2+4n \\}{(r-2-1)--(r-2-2)--(r-1-3)--(r-1-4)--(r-2-5)--(r-3-4)--(r-3-3)--(r-2-2)},\quad n\in\mathbb{Z}_{\geqslant1}.
\end{align*}
Note that they can be combined into a single ``almost polynomial'' family as even and odd members of the sequence:
\[
\aenroot{\& \& \frac{1}{2}n(3n+1) \& \makebox[\widthof{666}][c]{?} \\ \frac{3}{2}n(n+1) \& 3n(n+1) \& \& \& \frac{3}{2}n(n+1) \\ \& \& \frac{1}{2}(3n^2+5n+2) \& \makebox[\widthof{666}][c]{?} \\}{(r-2-1)--(r-2-2)--(r-1-3)--(r-1-4)--(r-2-5)--(r-3-4)--(r-3-3)--(r-2-2)},\quad n\in\mathbb{Z}_{\geqslant1}
\]

\paragraph{Case}
\[
\begin{tikzpicture}[baseline={(current bounding box.center)}]
\dv{0}{(0,0)}{1}
\dv{1}{(45:1)}{2}
\dv{2}{($(45:1)+(-45:1)$)}{3}
\dv{3}{(-45:1)}{4}
\deA{a0}{a1}\deA{a0}{a2}\deA{a0}{a3}\deA{a1}{a2}\deA{a2}{a3}
\end{tikzpicture}
\]
\begin{table}[h]
\setlength{\tabcolsep}{1pt}
\renewcommand{\arraystretch}{3}
{\scriptsize
\begin{tabular}{*{8}{c}}
$\rootK{4,1,8,6}$ & $\rootK{8,13,28,18}$ & $\rootK{28,37,92,60}$ & $\rootK{28,61,104,66}$ & $\rootK{60,73,192,126}$ & $\rootK{52,97,188,120}$ & $\rootK{104,121,328,216}$ & $\ldots$ \\
$\rootK{20,1,28,24}$ & $\rootK{20,13,52,36}$ & $\rootK{48,37,132,90}$ & $\rootK{120,61,288,204}$ & $\rootK{88,73,248,168}$ & $\rootK{168,97,420,294}$ & $\rootK{140,121,400,270}$ \\
$\rootK{48,1,60,54}$ & $\rootK{120,13,192,156}$ & $\rootK{368,37,580,474}$ & $\rootK{500,61,820,660}$ & $\rootK{748,73,1172,960}$ & $\vdots$ & $\vdots$ \\
$\rootK{88,1,104,96}$ & $\rootK{168,13,252,210}$ & $\rootK{448,37,680,564}$ & $\vdots$  & $\vdots$ &  &  \\
$\rootK{140,1,160,150}$ & $\rootK{388,13,512,450}$ & $\vdots$ &  &  &  &  \\
$\rootK{204,1,228,216}$ & $\rootK{472,13,608,540}$ &  &  &  &  &  \\
$\rootK{280,1,308,294}$ & $\rootK{812,13,988,900}$ &  &  &  &  &  \\
$\rootK{368,1,400,384}$ & $\rootK{932,13,1120,1026}$ &  &  &  &  &  \\
$\rootK{468,1,504,486}$ & $\vdots$  &  &  &  &  &  \\
$\rootK{580,1,620,600}$ &  &  &  &  &  &  \\
$\rootK{704,1,748,726}$ &  &  &  &  &  &  \\
$\rootK{840,1,888,864}$ &  &  &  &  &  &  \\
$\rootK{988,1,1040,1014}$ &  &  &  &  &  & \\
$\vdots$
\end{tabular}
}
\caption{The basis of $Z_s(\alpha_4)$ in the root system №45. Each root comes with its symmetric image (the coefficients for $\alpha_1$ and $\alpha_3$ are swapped).\label{table:k2m24}}
\end{table}
The beginning of the basis of $Z_s(\alpha_4)$ is listed in \cref{table:k2m24}. From the experimental evidence it seems that some subsets of the basis of $Z_s(\alpha_4)$ are (up to symmetry swapping $\alpha_1$ and $\alpha_3$) parametrized by polynomials. The roots with $c_2=1$ are of the form
\[
\aenroot{\& \makebox[\widthof{$6n^2$}][c]{1} \\ 2n(3n-1) \& \& 2n(3n+1) \\ \& 6n^2 \\}{(r-2-1)--(r-1-2)--(r-2-3)--(r-3-2)--(r-2-1)--(r-2-3)},\quad n\in\mathbb{Z}_{\geqslant0}.
\]
Roots with $c_2=13$ are covered by
\begin{align*}
& \aenroot{\& \makebox[\widthof{666}][c]{13} \\ 78n^2-122n+52 \& \& 78n^2-70n+20 \\ \& 78n^2-96n+36 \\}{(r-2-1)--(r-1-2)--(r-2-3)--(r-3-2)--(r-2-1)--(r-2-3)},\quad n\in\mathbb{Z}_{\geqslant0}, \\
& \aenroot{\& \makebox[\widthof{666}][c]{13} \\ 78n^2-86n+28 \& \& 78n^2-34n+8 \\ \& 78n^2-60n+18 \\}{(r-2-1)--(r-1-2)--(r-2-3)--(r-3-2)--(r-2-1)--(r-2-3)},\quad n\in\mathbb{Z}_{\geqslant0}
\end{align*}
Roots with $c_2=37$ are covered by
\begin{align*}
& \aenroot{\& \makebox[\widthof{666}][c]{37} \\ 222n^2-326n+132 \& \& 22n^2-178n+48 \\ \& 222n^2-252n+90 \\}{(r-2-1)--(r-1-2)--(r-2-3)--(r-3-2)--(r-2-1)--(r-2-3)},\quad n\in\mathbb{Z}_{\geqslant0}, \\
& \aenroot{\& \makebox[\widthof{666}][c]{37} \\ 222n^2-266n+92 \& \& 222n^2-118n+28 \\ \& 222n^2-192n+60 \\}{(r-2-1)--(r-1-2)--(r-2-3)--(r-3-2)--(r-2-1)--(r-2-3)},\quad n\in\mathbb{Z}_{\geqslant0}
\end{align*}
The found possible values of $c_2$ are
\[ 1,13,37,61,73,97,121,181,193,241,253,277,337,349,397,421,433,481,517,673,781,\ldots \]
The order of their first appearances is not coherent with height. All of them except for $1$ are either primes $p$, $p\equiv1\pmod{12}$, or products of two primes $pq$, $p\equiv q\equiv\pm1\pmod{12}$. It is not clear whether the products with $p=q$ appear in this sequence except fo $p=q=11$.

\bibliographystyle{amsalpha}
\bibliography{kac-moody-x-centralizer}

\appendix
\clearpage
\section{Centralizers in rank $3$} \label{app:rank3}
\input{table-rk-leq3-test}

\clearpage
\newgeometry{margin=1cm}
\section{Centralizers in rank $\geqslant 4$} \label{app:rank-geq4}
%\clearpage
\tikzset{every picture/.append style={scale=0.75}}
\tikzset{dynkin-vertex/.append style={font=\scriptsize}}
%\tiny
%\scriptsize
%\footnotesize
\small
{\renewcommand{\arraystretch}{1.5}
\tikzset{every picture/.append style={show background rectangle,inner frame sep=1ex,background rectangle/.style={use as bounding box}}}
\noindent
% [inline block 0: 1 envs, 136031 chars -> data_tex | \begin{longtable}{|l|@{}c@{}|c|@{}c@{}|m{2.85cm}|} \hline...]

}

\restoregeometry
\normalsize

\end{document}